\title{Cobordism of exact links}
\author{Vincent Blanl\oe il}
\address{Universit\'e de Strasbourg,
IRMA,
7, rue Ren\'e Descartes,
67084 Strasbourg cedex, 
France}
\email{v.blanloeil@math.unistra.fr}
\author{Osamu Saeki}
\address{Institute of Mathematics for Industry, Kyushu University,
Motooka 744, Nishi-ku, Fukuoka 819-0395,
Japan}
\email{saeki@imi.kyushu-u.ac.jp}
\thanks{The second author has been partially supported by the 
University of Strasbourg, France,
by FY 2008 
Researcher Exchange Program between JSPS and CNRS, and
by Grant-in-Aid for Scientific Research
(No.~23244008), JSPS}
\newtheorem{thm}{Theorem}[section]
\newtheorem{lem}[thm]{Lemma}
\newtheorem{prop}[thm]{Proposition}
\theoremstyle{definition}
\newtheorem{dfn}[thm]{Definition}
\theoremstyle{remark}
\newtheorem{exam}[thm]{Example}
\newtheorem{rmk}[thm]{Remark}
\newtheorem{ask}[thm]{Question}
\newcommand{\Z}{\mathbf{Z}}
\newcommand{\Int}{\mathop{\mathrm{Int}}\nolimits}
\newcommand{\Ker}{\mathop{\mathrm{Ker}}\nolimits}
\newcommand{\Coker}{\mathop{\mathrm{Coker}}\nolimits}
\newcommand{\rk}{\mathop{\mathrm{rank}}\nolimits}
\newcommand{\Hom}{\mathop{\mathrm{Hom}}\nolimits}
\newcommand{\Tors}{\mathop{\mathrm{Tors}}\nolimits}
\newcommand{\Image}{\mathop{\mathrm{Im}}\nolimits}
\renewcommand{\tilde}{\widetilde}
\renewcommand{\setminus}{\smallsetminus}
\def\spmapright#1{\smash{%
 \mathop{\hbox to 1cm{\rightarrowfill}}
  \limits^{#1}}}
\def\spmapleft#1{\smash{%
 \mathop{\hbox to 1cm{\leftarrowfill}}
  \limits^{#1}}}
\begin{document}
\begin{abstract}
In this paper, we 
introduce the notion of 
exact links,
which constitute a broad class of high dimensional links, 
in order to clarify the results 
obtained in the article ^^ ^^ V.~Blanl\oe il,
Cobordisme des entrelacs,
Ann.\ Fac.\ Sci.\ Toulouse Math.\ (6) \textbf{7} (1998), 
185--205" concerning cobordisms of odd
dimensional non-spherical links. 
\end{abstract}

\maketitle 

\section{Introduction}\label{sect:intro}

In \cite{B2}, a lot of important results have
been obtained concerning cobordisms of odd
dimensional non-spherical links. Unfortunately,
some statements must be clarified, since there are even counter examples.
In fact, when studying knot cobordisms it is 
very difficult
to begin with
an arbitrary Seifert surface of a given link. For example, if we
consider $S^{n-1} \times D^{n+1}$ embedded trivially
in $S^{2n+1}$ as a Seifert surface of its boundary
link $K \cong S^{n-1} \times S^n$, then the associated
Seifert form is represented by the $0 \times 0$ matrix,
since $H_n(S^{n-1} \times D^{n+1}; \Z)$ vanishes.
If there existed a simple link $K'$, i.e.\ a link $K'$
admitting an $(n-1)$-connected Seifert surface, which is
cobordant to $K$ and with algebraically cobordant
Seifert form (for details, see \S\ref{preliminaries}),
then $K'$ would necessarily
be diffeomorphic to the sphere $S^{2n-1}$.
This gives a simple counter example to
\cite[Propositions~2.1 and 2.2]{B2}.

In this paper, we will instead use a 
reasonably broad class of Seifert surfaces, 
called 
exact Seifert surfaces,
and obtain similar results.

Throughout the paper, we work
in the smooth category.
All homology and
cohomology groups are with integer coefficients.
The symbol ^^ ^^ $\cong$" denotes a diffeomorphism
between manifolds
or an appropriate isomorphism between algebraic objects.

The second author would like to express his thanks to
the people at IRMA, 
University of Strasbourg,
France, for their hospitality during the preparation of
the manuscript.

\section{Preliminaries}\label{preliminaries}

Let us first recall some definitions.

\begin{dfn}\label{dfn:knot}
Let $K$ be a closed $(2n-1)$-dimensional manifold embedded
in the $(2n+1)$-dimensional sphere $S^{2n+1}$.
We suppose that $K$ is 
$(n-2)$-connected.
(We adopt the convention that a space
is $(-1)$-connected if it is not empty.)
We further assume that it is oriented.
Then we call $K$ or its (oriented) isotopy class
a \emph{$(2n-1)$-link}, or
simply a \emph{link}.

A link $K$ is a \emph{knot} if $K$ is 
a homotopy $(2n-1)$-sphere.
\end{dfn}

\begin{dfn}\label{dfn:cob}
Two $(2n-1)$-links $K_0$ and $K_1$ in $S^{2n+1}$ are 
said to be \emph{cobordant} if there exists a 
properly embedded oriented $2n$-dimensional
manifold $X$ of $S^{2n+1} \times [0,1]$ such that
\begin{enumerate}
\item $X$ is diffeomorphic to $K_0 \times [0,1]$, and
\item $\partial X = (-K_0 \times \{0\}) \cup (K_1 \times \{1\})$,
\end{enumerate}
where $-K_0$ is obtained from $K_0$ by reversing
the orientation.
\end{dfn} 

It is known that for every $(2n-1)$-link $K$, 
there exists a compact oriented $2n$-dimensional
submanifold $F$ of $S^{2n+1}$ having $K$ as 
boundary. 

\begin{dfn}
Let $K$ be a $(2n-1)$-link.
A compact oriented $2n$-dimensional submanifold $F$ of 
$S^{2n+1}$
having $K$ as its oriented boundary is called a 
\emph{Seifert surface} associated with $K$.
\end{dfn}

\begin{dfn}
We say that a $(2n-1)$-link is \emph{simple} if 
it admits an $(n-1)$-connected Seifert surface.
\end{dfn}

\begin{dfn}\label{dfn:fibered}
We say that a $(2n-1)$-link $K$ is \emph{fibered}
if there exists a smooth
fibration 
$$\phi : S^{2n+1} \setminus K \to S^1$$
and a trivialization 
$$\tau : N(K) \to K \times D^2$$
of a closed tubular neighborhood $N(K)$ of $K$
in $S^{2n+1}$ such that $\phi|_{N(K) \setminus K}$
coincides with $\pi \circ \tau|_{N(K) \setminus K}$,
where $\pi : K \times (D^2 \setminus \{0\}) \to
S^1$ is the composition of the projection to the
second factor and the obvious projection
$D^2 \setminus \{0\} \to S^1$.

Note that then the closure of each fiber of $\phi$
in $S^{2n+1}$
is a compact $2n$-dimensional oriented manifold
whose boundary coincides with $K$.
We shall often call the 
closure of each fiber simply a \emph{fiber}.

Furthermore, we say that a fibered $(2n-1)$-link $K$ is
\emph{simple} if each fiber of $\phi$ is $(n-1)$-connected.
\end{dfn}

\begin{dfn}
Suppose that $F$ is a compact oriented $2n$-dimensional 
submanifold of
$S^{2n+1}$, and let $G$ be the quotient of $H_n(F)$ by its
$\Z$-torsion. 
The \emph{Seifert form} associated with $F$ is the bilinear form
$$A : G \times G \to \Z$$ 
defined as follows. For $(x,y)
\in G \times G$, we define $A(x,y)$ to be the linking number 
in $S^{2n+1}$ of $\xi_+$ and $\eta$, where $\xi$ and $\eta$ are 
$n$-cycles
in $F$ representing $x$ and $y$ respectively, and
$\xi_+$ is the $n$-cycle $\xi$ pushed off $F$ into 
the positive normal direction
to $F$ in $S^{2n+1}$.

By definition a \emph{Seifert form} associated with a
$(2n-1)$-link $K$ is the Seifert form
associated with $F$, where $F$ is a Seifert 
surface associated with $K$.
A matrix representative of a Seifert form
with respect to a basis of $G$ is called
a \emph{Seifert matrix}.
\end{dfn}

\begin{dfn}\label{dfn1}
Let $\mathcal{A}$ be the set of all bilinear forms defined on 
free $\Z$-modules $G$ of finite rank. Set $\varepsilon = (-1)^n$.
For $A \in \mathcal{A}$, let us denote by $A^T$ the transpose of $A$,
by $S$ the $\varepsilon$-symmetric form $A+\varepsilon A^T$ 
associated with $A$, by $S^\ast : G \rightarrow G^\ast$ the 
adjoint of $S$ with $G^\ast$ being the dual 
$\Hom_\Z(G, \Z)$ of $G$, and by 
$\overline{S} : \overline{G} \times \overline{G}
\rightarrow \Z$ the 
$\varepsilon$-symmetric nondegenerate form induced by $S$ 
on $\overline{G} = G/\Ker{S^\ast}$. 
A submodule $M$ of $G$ is said to be \emph{pure} 
if $G/M$ is torsion free, or equivalently if $M$ is a direct
summand of $G$. For a submodule
$M$ of $G$, let us denote by $M^{\wedge}$ the smallest pure 
submodule of $G$ that contains $M$. 
We denote by $\overline{M}$ the image of $M$ in $\overline{G}$
by the natural projection map.
\end{dfn}

\begin{dfn}\label{dfn2}
Let $A : G \times G \rightarrow \Z$ 
be a bilinear form in $\mathcal{A}$. The 
form $A$ is \emph{Witt associated to $0$} if the rank $m$ of 
$G$ is even and
there exists a pure submodule $M$ of rank $m/2$ in $G$ such that $A$ 
vanishes on $M \times M$. Such a submodule $M$ is called a \emph{metabolizer} 
for $A$.
\end{dfn}

\begin{dfn}\label{dfn3}
Let $A_i : G_i \times G_i \rightarrow \Z$, 
$i=0, 1$, be two bilinear forms in $\mathcal{A}$. 
Set $G = G_0 \oplus G_1$, $A = (-A_0) \oplus A_1$,
$S = A + \varepsilon A^T$,
and $S_i = A_i + \varepsilon A_i^T$, $i = 0, 1$. 
The form $A_0$ is said to be \emph{algebraically cobordant} to $A_1$
if there exist a metabolizer $M$ for $A$ such that 
$\overline{M}$ is pure in $\overline{G}$, an 
isomorphism $\varphi : \Ker{S_0^\ast} \rightarrow
\Ker{S_1^\ast}$, and an isomorphism $\theta : 
\Tors(\Coker{S_0^\ast}) \rightarrow
\Tors(\Coker{S_1^\ast})$ which satisfy the following 
two conditions: 
\begin{equation}
M \cap \Ker{S^\ast} = \bigl\{(x, \varphi(x))\,|\, 
x \in \Ker{S_0^\ast} \bigr\} \subset 
\Ker{S_0^\ast} \oplus \Ker{S_1^\ast}
= \Ker{S^\ast}, 
\tag{$c1$}
\end{equation}
\begin{align}
d(S^\ast(M)^{\wedge}) & = \bigl\{ (y, \theta(y))\,|\, y \in
\Tors(\Coker{S_0^\ast})\bigr\} \tag{$c2$} \\
& \subset \Tors(\Coker{S_0^\ast})
\oplus \Tors(\Coker{S_1^\ast}) = 
\Tors(\Coker{S^\ast}), 
\nonumber
\end{align}
where $d$ is the quotient map $G^\ast \rightarrow 
\Coker{S^\ast}$
and ^^ ^^ $\Tors$" means the torsion subgroup. 
In the above situation, we also say that
$A_0$ and $A_1$ are \emph{algebraically cobordant with respect to
$\varphi$ and $\theta$}. 
\end{dfn}

\begin{rmk}
As pointed out in \cite{BS2}, the relation of
algebraic cobordism may not be an equivalence
relation on the set of all
integral bilinear forms of finite rank
(see also \cite{V2}).
However, it is an equivalence relation on the
set of all \emph{unimodular} bilinear forms of finite
rank (see \cite{BM}).
\end{rmk}

\section{Exact links}

\begin{dfn}
Suppose $n \geq 2$.
A Seifert surface $F$ of a $(2n-1)$-link $K$
is said to be \emph{exact}
if the sequence
$$
0 \to H_n(K) \to H_n(F)/\Tors{H_n(F)} \to 
H_n(F, K)/\Tors{H_n(F, K)} \to 
H_{n-1}(K) \to 0,
$$
derived from the homology exact sequence
for the pair $(F, K)$,
is well defined and exact. 
Note that the homomorphism 
$$H_n(F, K)/\Tors{H_n(F, K)} \to H_{n-1}(K)$$
may not be well defined in general.
Here, we impose the condition that this map should be well defined.
A $(2n-1)$-link
is said to be \emph{exact} if
it admits an exact Seifert surface.
\end{dfn}

\begin{exam}
Consider $K = S^{n-1} \times S^n$ embedded trivially in
$S^{2n} \subset S^{2n+1}$, $n \geq 2$. Then $K$ is a $(2n-1)$-link
and it bounds two Seifert surfaces $F_0 = D^n \times S^n$
and $F_1 = S^{n-1} \times D^{n+1}$, both of which are
embedded in $S^{2n}$. Then $F_0$ is exact, while
$F_1$ is not, since $H_n(S^{n-1} \times S^n)
\to H_n(S^{n-1} \times D^{n+1})$ is not a monomorphism.
\end{exam}

\begin{lem}\label{lem:exact}
For $n \geq 2$, we have the following.
\begin{itemize}
\item[$(1)$] A simple $(2n-1)$-link is always exact. In fact,
every $(n-1)$-connected Seifert surface is exact.
\item[$(2)$] A fibered $(2n-1)$-link is always exact.
In fact, every fiber is exact.
\item[$(3)$] A $(2n-1)$-knot is always exact.
In fact, every Seifert surface is exact.
\end{itemize}
\end{lem}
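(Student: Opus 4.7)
The three cases differ sharply in difficulty, and I would handle them in order of decreasing ease. For (3), $K$ is a homotopy $(2n-1)$-sphere, so $H_n(K) = H_{n-1}(K) = 0$, and the long exact sequence (LES) of the pair $(F, K)$ immediately yields an isomorphism $H_n(F) \cong H_n(F, K)$. Passing to the torsion-free quotient, the displayed sequence becomes $0 \to 0 \to H_n(F)/\Tors H_n(F) \cong H_n(F,K)/\Tors H_n(F,K) \to 0 \to 0$, trivially exact.

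For (1), assume $F$ is $(n-1)$-connected. Then $H_i(F) = 0$ for $0 < i < n$, and Lefschetz duality combined with the universal coefficient theorem (UCT) gives $H_{n+1}(F, K) \cong H^{n-1}(F) = 0$. A standard handle-cancellation result identifies $F$ up to homotopy with a wedge of $n$-spheres, so $H_n(F)$ and $H_n(F, K) \cong H^n(F)$ are both free abelian; the LES then collapses directly to $0 \to H_n(K) \to H_n(F) \to H_n(F, K) \to H_{n-1}(K) \to 0$, which is the displayed sequence with no torsion to quotient by.

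Case (2) is the main work, and the plan is to substitute the fibered structure for the missing $(n-1)$-connectedness of $F$. Writing $M = S^{2n+1} \setminus K$ as the mapping torus of the monodromy $h \co F \to F$ with $h|_K = \id$, a Mayer--Vietoris computation for $S^{2n+1} = M \cup N(K)$ together with the $(n-2)$-connectedness of $K$ and K\"unneth on $K \times S^1$ yields $H_{n-1}(M) = 0$, $H_n(M) \cong H_{n-1}(K)$, and $H_{n+1}(M) \cong H_n(K)$. Feeding these into the Wang sequence of $F \to M \to S^1$ forces $1 - h_*$ to be surjective, hence an isomorphism, on $H_{n-1}(F)$. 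Since $h$ is the identity on $K$, the image of $i_* \co H_{n-1}(K) \to H_{n-1}(F)$ lies in $\Ker(1 - h_*) = 0$, so $i_*$ vanishes in degree $n-1$. By UCT together with $H_{n-2}(K) = 0$, the cohomological dual $i^* \co H^{n-1}(F) \to H^{n-1}(K)$ also vanishes, and Poincar\'e--Lefschetz duality identifies this with $\partial \co H_{n+1}(F, K) \to H_n(K)$, so $\partial = 0$. Thus the LES of $(F, K)$ collapses integrally to $0 \to H_n(K) \to H_n(F) \to H_n(F, K) \to H_{n-1}(K) \to 0$.

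The main obstacle is then passing from this integral sequence to the displayed torsion-free one. Injectivity $H_n(K) \hookrightarrow H_n(F)/\Tors H_n(F)$ is automatic because $H_n(K)$ is free abelian (Poincar\'e duality on $K$ plus UCT), so its image cannot meet $\Tors H_n(F)$. For exactness at the middle terms I would invoke the identity $\Ker S^\ast = \Ker(1 - h_*|_G)$ on $G = H_n(F)/\Tors H_n(F)$, which comes from the standard relation between the monodromy and the unimodular Seifert matrix of a fibered link; combine it with the Wang identification $\Image i_* = \Ker(1 - h_*|_{H_n(F)})$; and apply the snake lemma to $0 \to \Tors H_n(F) \to H_n(F) \to G \to 0$ with endomorphism $1 - h_*$. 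The snake-lemma step needs $1 - h_*$ to be an isomorphism on $\Tors H_n(F)$, which follows from the natural isomorphism $\Tors H_n(F) \cong \Tors H_{n-1}(F)$ (from Lefschetz duality and UCT) together with the earlier isomorphism on $H_{n-1}(F)$. The same torsion identification, combined with the commutativity $\partial \circ h_* = \partial$ (using $h_*|_{H_{n-1}(K)} = \id$), forces $\partial$ to vanish on $\Image(1 - h_*|_{H_n(F,K)}) = \Tors H_n(F,K)$, giving the well-definedness of the last map. Tracking these torsion identifications cleanly is what I expect to be the most delicate bookkeeping.
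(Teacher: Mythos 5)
Your parts (1) and (3) coincide with the paper's argument. For part (2) you take a genuinely different route: the paper never touches the monodromy, but instead uses the homotopy equivalence $S^{2n+1}\setminus F\simeq F$ to get Alexander-duality isomorphisms $\tilde{H}_i(F)\cong\tilde{H}^{2n-i}(F)$, and then reads off from the long exact sequence of $(F,K)$ that $\tilde{H}_{n-1}(F)\to H_{n-1}(F,K)$ and $\tilde{H}_{n+1}(F)\to H_{n+1}(F,K)$ are isomorphisms, whence $\partial_n$ is onto and $i_*$ is injective in degree $n$ --- the same integral four-term sequence you extract from the Wang sequence. Your Wang-sequence derivation of that sequence is correct. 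Where the two proofs are both thin is the descent to torsion-free quotients, which the paper dismisses with ``we see easily'' and which you attack via the snake lemma for $1-h_*$. Your plan is workable but carries avoidable debt: the asserted equality $\Image(1-h_*|_{H_n(F,K)})=\Tors{H_n(F,K)}$ is an overstatement (you only need, and only have, the inclusion $\supseteq$, coming from $1-h_*$ being an automorphism of the torsion subgroup); and both the ``Wang identification'' $\Image i_*=\Ker(1-h_*|_{H_n(F)})$ and the $h_*$-equivariance of $\Tors{H_n(F)}\cong\Tors{H_{n-1}(F)}$ are true but each requires a naturality check you do not supply. All of this can be short-circuited by pure algebra once the integral sequence is in hand: since $H_n(K)$ is free, $\Tors{H_n(F)}\cap\Image i_*=0$, so $j_*$ is injective on $\Tors{H_n(F)}$; since $\Tors{H_n(F)}$ and $\Tors{H_n(F,K)}$ are finite of the same order (both isomorphic to $\Tors{H_{n-1}(F)}$ by your dualities, or via the paper's $\tilde{H}_n(F)\cong\tilde{H}^n(F)\cong H_n(F,K)$), $j_*$ restricts to an isomorphism of torsion subgroups; hence $\Tors{H_n(F,K)}=j_*(\Tors{H_n(F)})\subset\Ker\partial$ gives well-definedness of the last map, and exactness of the quotient sequence follows with no further appeal to the monodromy.
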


\begin{proof}
In the following, let $K$ be a $(2n-1)$-link and 
$F$ a relevant Seifert surface.

(1) Let us consider the exact sequence
$$H_{n+1}(F, K) \to H_n(K) \to H_n(F) \to
H_n(F, K) \to H_{n-1}(K) \to H_{n-1}(F).$$
Then we have the desired result, 
since $H_{n+1}(F, K) \cong H^{n-1}(F) = 0$, 
$H_{n-1}(F) = 0$, and
$H_n(F)$ and $H_n(F, K)$ are torsion free.

(2) If $F$ is a fiber of a fibered link, then it is easy to
see that $S^{2n+1} \setminus F$ is homotopy equivalent to $F$.
Hence, by Alexander duality, we have
$$\tilde{H}_i(F) \cong \tilde{H}^{2n-i}(F)$$
for all $i$, where $\tilde{H}_*$ and $\tilde{H}^*$
denote reduced homology and cohomology groups, respectively.
Consider the exact sequence
$$
\begin{array}{cccccc}
0 & \to & \tilde{H}_{n+1}(F) & \to & H_{n+1}(F, K) & \to \\
\tilde{H}_n(K) & \to & \tilde{H}_n(F) & \to & H_n(F, K) & \to \\
\tilde{H}_{n-1}(K) & \to & \tilde{H}_{n-1}(F) & \to & H_{n-1}(F, K) & \to 0.
\end{array}
$$
(Recall that $K$ is $(n-2)$-connected.)
Since 
$$\tilde{H}_{n-1}(F) \cong \tilde{H}^{n+1}(F) \cong H_{n-1}(F, K)$$
and $\tilde{H}_{n-1}(F) \to H_{n-1}(F, K)$ is an epimorphism,
it must be an isomorphism.
Hence $H_n(F, K) \to \tilde{H}_{n-1}(K)$ is an epimorphism.
Furthermore, since $\tilde{H}_{n+1}(F) \cong \tilde{H}^{n-1}(F)
\cong H_{n+1}(F, K)$, $\tilde{H}_{n+1}(F) \to H_{n+1}(F, K)$ is
a monomorphism, and $\tilde{H}_n(K)$ is torsion free,
the homomorphism 
$\tilde{H}_{n+1}(F) \to H_{n+1}(F, K)$ must be an isomorphism.
Thus $\tilde{H}_n(K) \to \tilde{H}_n(F)$ is a monomorphism.
Since $\tilde{H}_n(K)$ is torsion free, the map
$$\tilde{H}_n(K) \to \tilde{H}_n(F)/\Tors{\tilde{H}_n(F)}$$
is also a monomorphism.
Finally, since $\tilde{H}_n(F) \cong \tilde{H}^n(F)
\cong H_n(F, K)$, we have
$\Tors{\tilde{H}_n(F)} \cong \Tors{H_n(F, K)}$.
Then we see easily that
the sequence
$$
0 \to \tilde{H}_n(K) \to \tilde{H}_n(F)/\Tors{\tilde{H}_n(F)}
\to H_n(F, K)/\Tors{H_n(F, K)}
\to \tilde{H}_{n-1}(K) \to 0
$$
is well defined and exact. 

(3) If $K$ is a homotopy sphere, 
then $H_n(K) = 0 = \tilde{H}_{n-1}(K)$, and hence
$$0 \to \tilde{H}_n(F) \to H_n(F, K) \to 0$$
is exact. Thus the result is obvious. This completes the proof. 
\end{proof}

The following can be regarded as a correction of
\cite[Proposition~2.1]{B2}.

\begin{prop}\label{prop2.1}
Let $K$ be an exact $(2n-1)$-link, $n \geq 2$, 
and $A$ its Seifert form associated with an exact
Seifert surface. Then, there exists a simple
$(2n-1)$-link $K'$ cobordant to $K$ such that
the Seifert form of $K'$ associated with an $(n-1)$-connected
Seifert surface is algebraically cobordant to $A$.
\end{prop}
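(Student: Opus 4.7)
The plan is to perform a sequence of ambient surgeries on the exact Seifert surface $F$ inside $S^{2n+1} \times [0,1]$, producing simultaneously a link cobordism from $K$ to a simple link $K'$ with an $(n-1)$-connected Seifert surface $F'$, and a $(2n+1)$-dimensional cobordism $Y$ between $F$ and $F'$ from which the algebraic cobordism between the Seifert forms will be read off. Working in a collar of $S^{2n+1} \times \{0\}$, I would inductively attach handles of index $i+1$ to kill $\pi_i(F)$ for $1 \leq i \leq n-2$. Such classes can be represented by embedded $i$-spheres in the interior of $F$ by general position (since $2i < 2n$), and their normal bundles in $S^{2n+1}$ can be trivially framed in this stable range. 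These interior ambient surgeries leave $K$ unchanged and, after finitely many steps, produce an $(n-2)$-connected intermediate Seifert surface $F_1$ with $\partial F_1 = K$; since each such surgery only alters $H_i$ and $H_{2n-i-1}$ with $i \leq n-2$, $H_n(F_1)$ and $H_n(F_1, K)$ are unchanged, and $F_1$ is itself exact.

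The critical step is then killing $H_{n-1}(F_1) \cong \pi_{n-1}(F_1)$. Here the exactness hypothesis is essential: it forces the connecting map $H_n(F, K) \to H_{n-1}(K)$ in the long exact sequence of the pair to be surjective, which in turn makes $H_{n-1}(F_1) \to H_{n-1}(F_1, K)$ injective. Using this, together with standard framing arguments in $S^{2n+1}$ (possibly after stabilizing $F_1$ by trivial handles $S^{n-1} \times S^{n+1}$ to remove framing obstructions), I would represent a generating set of $H_{n-1}(F_1)/\Tors{H_{n-1}(F_1)}$ by embedded $(n-1)$-spheres and perform ambient surgery on them. These surgeries may meet $\partial F_1$, so they drag $K$ through a nontrivial link cobordism $X$ to a new boundary $K' := \partial F'$. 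With $F'$ now $(n-1)$-connected, a further surgery pass on $K'$ in its complement would arrange $K'$ to be $(n-2)$-connected, so that $K'$ is indeed a simple $(2n-1)$-link cobordant to $K$.

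To extract the algebraic cobordism between $A$ and the Seifert form $A'$ of $F'$, let $Y$ denote the $(2n+1)$-dimensional handle cobordism from $F$ to $F'$ constructed above. I would set $G = (H_n(F)/\Tors{H_n(F)}) \oplus (H_n(F')/\Tors{H_n(F')})$ and define $M \subset G$ as the kernel of the restriction map to $H_n(Y)/\Tors{H_n(Y)}$. A Poincar\'e--Lefschetz duality argument on the pair $(Y, F \sqcup F')$ would then show that $M$ is a pure submodule of rank half that of $G$, that $(-A) \oplus A'$ vanishes on $M \times M$, and that $\overline{M}$ is pure in $\overline{G}$. The isomorphisms $\varphi : \Ker S_0^\ast \to \Ker S_1^\ast$ and $\theta : \Tors(\Coker S_0^\ast) \to \Tors(\Coker S_1^\ast)$, together with conditions $(c1)$ and $(c2)$, would follow by chasing the long exact sequence of $(Y, F \sqcup F')$ against the exactness hypothesis for $F$ and the analogous exact sequence for $F'$ (valid by Lemma~\ref{lem:exact}~(1) since $F'$ is $(n-1)$-connected).

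The principal obstacle is the critical-dimension surgery step: choosing embedded $(n-1)$-spheres with trivial framings that kill $H_{n-1}(F_1)$ without re-introducing low-dimensional homology, either in the intermediate Seifert surface or on the new boundary $K'$. This is precisely where the exactness hypothesis is indispensable, since without it the relevant $(n-1)$-cycles need not behave well with respect to $\partial F_1 = K$ for a consistent choice of ambient surgery representatives.
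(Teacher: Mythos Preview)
Your outline has the right overall shape but misplaces the key idea and leaves real gaps. The ambient $(n-1)$-surgeries are performed on spheres in the \emph{interior} of $F$ (via an engulfing argument in $D^{2n+2}$, as in \cite{B2,L1}); they do not meet $\partial F_1$, and in fact $K'=K$. More importantly, you misidentify where exactness is used. You correctly deduce that $j_*\co H_{n-1}(F)\to H_{n-1}(F,K)$ is injective, but this is not a framing device. In the paper it is used as follows: when the surgery sphere $a$ has $[a]$ of infinite order in $H_{n-1}(F)$, injectivity of $j_*$ forces $j_*[a]\in H_{n-1}(F,K)\cong H^{n+1}(F)$ to have infinite order, so there is an $(n+1)$-cycle $\tilde a$ with $a\cdot\tilde a\neq0$. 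This is precisely what makes the Mayer--Vietoris map $s\co H_{n+1}(F)\to H_n(\psi(S^{n-1}\times S^n))$ nonzero, which in turn forces the inclusion $F_T\hookrightarrow F$ to induce an isomorphism on $H_n/\Tors$. Without this control the rank of $H_n$ can change unpredictably under surgery; this is exactly the error in \cite{B2} being repaired (cf.\ the counterexample $S^{n-1}\times D^{n+1}$ in the introduction). You also propose to kill only generators of $H_{n-1}(F_1)/\Tors$, but reaching an $(n-1)$-connected $F'$ requires killing the torsion too; the paper handles this separately (Case~2), and there each surgery \emph{increases} $\rk H_n$ by two, contributing summands $\Z\langle\ell_i\rangle\oplus\Z\langle\ell_i^\star\rangle$ that must appear in the metabolizer.

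Your proposed metabolizer $M=\ker\bigl[(H_nF/\Tors)\oplus(H_nF'/\Tors)\to H_n(Y)/\Tors\bigr]$ is not the paper's, and you give no argument that $(-A)\oplus A'$ vanishes on it: the Seifert form is a linking number in $S^{2n+1}$, and bounding in the $(2n+1)$-dimensional handle cobordism $Y$ between $F$ and $F'$ says nothing directly about such linking. The paper instead exploits the explicit surgery-by-surgery computation to obtain an isomorphism
\[
H_n(F')/\Tors\;\cong\;(H_n(F)/\Tors)\ \oplus\ \bigoplus_{i\in\mathcal I}\bigl(\Z\langle\ell_i\rangle\oplus\Z\langle\ell_i^\star\rangle\bigr),
\]
takes $M$ to be the span of the diagonal $\{(x,x):x\in G\}$ together with the $\ell_i$, and checks the metabolizer and purity conditions by hand. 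Condition $(c2)$ then requires a separate order-counting lemma showing $|d(S_B^*(M)^\wedge)|=|\Tors(\Coker S^*)|$; your appeal to ``chasing the long exact sequence of $(Y,F\sqcup F')$'' does not supply this.
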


\begin{rmk}
Note that when $n = 1$, every $1$-link
admits a connected Seifert surface, and
hence is simple.
\end{rmk}

\begin{proof}[Proof of Proposition~\textup{\ref{prop2.1}}]
Let $F$ be an exact Seifert surface of $K$.
By exactly the same method as in \cite{B2, L1},
with the help of an engulfing theorem,
we can perform embedded surgeries on $F$
inside the disk $D^{2n+2}$ along spheres $a$ of
dimensions $\leq n-1$ embedded in $F$ so that
we obtain a simple knot $K'$ cobordant to $K$
and an $(n-1)$-connected Seifert surface $F'$
for $K'$.

Let us examine the relationship between the Seifert
forms with respect to $F$ and $F'$.
If the sphere $a$ along which the surgery is
performed is of dimension less than or equal to $n-2$,
then it does not affect the $n$-th homology of $F$.
We again denote by $F$ the result of such surgeries:
in particular, $F$ is $(n-2)$-connected.
Let us now consider the case where $a$ is of
dimension $n-1$. In the following, $[a]$ will
denote the homology class in $H_{n-1}(F)$ represented
by $a$, where we fix its orientation once and for all.

\medskip

{\it Case}~1. When $[a]$ has infinite order in $H_{n-1}(F)$.

Since $K$ is exact, the boundary homomorphism
$\partial_* : H_n(F, K) \to H_{n-1}(K)$ is surjective.
By the exact sequence
$$H_n(F, K) \spmapright{\partial_*} 
H_{n-1}(K) \spmapright{i_*} H_{n-1}(F)
\spmapright{j_*} H_{n-1}(F, K),$$
where $i : K \to F$ and $j : F \to (F, K)$ are the inclusions,
we see that $j_*$ is injective and hence
$j_*[a]$ has infinite order in $H_{n-1}(F, K)
\cong H^{n+1}(F)$.
Therefore, there exists an $(n+1)$-cycle $\tilde{a}$
of $F$ such that the intersection number $a \cdot \tilde{a}$
does not vanish. We choose $\tilde{a}$ so that
$m = |a \cdot \tilde{a}| (> 0)$ is the smallest possible.

Let $\psi : D^n \times D^{n+1} \to D^{2n+2}$ be
the $n$-handle used by the surgery in question
such that $\psi(S^{n-1} \times \{0\}) = a$.
As in \cite{B2}, let us put
$$F_T = F \setminus \Int(\psi(S^{n-1} \times D^{n+1})),
\quad F^\star = F_T \cup \psi(D^n \times S^n).$$
Let us consider the Mayer-Vietoris exact sequence
associated with the
decomposition $F = F_T \cup \psi(S^{n-1} \times D^{n+1})$:
\begin{eqnarray*}
& & H_{n+1}(F) \spmapright{s} 
H_n(\psi(S^{n-1} \times S^n)) \to
H_n(F_T) \to H_n(F) \\
& & \spmapright{t} H_{n-1}(\psi(S^{n-1} \times S^n))
\spmapright{u} 
H_{n-1}(F_T) \oplus H_{n-1}(\psi(S^{n-1} \times D^{n+1})).
\end{eqnarray*}
Since the map $s$ is given by the intersection number with
$a$, its image coincides with $m \Z \subset \Z
\cong H_n(\psi(S^{n-1} \times S^n))$. 
Furthermore, since $u$ is an injection, $t$
is the zero map. Therefore, we have the exact sequence
$$0 \to \Z_m \to H_n(F_T) \to H_n(F) \to 0.$$
Therefore, the inclusion $F_T \to F$
induces an isomorphism
$$H_n(F_T)/\Tors{H_n(F_T)} \to H_n(F)/\Tors{H_n(F)}.$$

\begin{rmk}
In \cite{B2}, it is stated that the map $s$
is surjective, since the image is generated by
the intersection of the $(n+1)$-cycle dual to $a$
and $\psi(S^{n-1} \times S^n)$. However, such
an $(n+1)$-cycle dual to $a$ may not exist, since
$\partial F$ may not be a sphere. Here, we used the
assumption that $F$ is an exact Seifert surface
in order to show that the map $s$ is non-trivial.
\end{rmk}

Similarly we also have the following exact sequence
obtained from the Mayer-Vietoris
exact sequence associated with the decomposition
$F^\star = F_T \cup \psi(D^n \times S^n)$:
\begin{eqnarray*}
& & 0 \to H_n(\psi(S^{n-1} \times S^n)) \to H_n(F_T)
\oplus H_n(\psi(D^n \times S^n)) \to H_n(F^\star) \\
& & \to H_{n-1}(\psi(S^{n-1} \times S^n)) \spmapright{u'}
H_{n-1}(F_T).
\end{eqnarray*}
Note that the map $u'$ is injective, since the
image of the composition 
$$H_{n-1}(\psi(S^{n-1} \times S^n)) \spmapright{u'}
H_{n-1}(F_T) \spmapright{v} H_{n-1}(F)$$
is generated by $[a]$ which is of infinite order,
where $v$ is the homomorphism induced by the
inclusion. Therefore, we see that the inclusion
induces an isomorphism $H_n(F_T) \to H_n(F^\star)$.

Summarizing, we have the isomorphisms
$$H_n(F)/\Tors{H_n(F)} \spmapleft{\cong}
H_n(F_T)/\Tors{H_n(F_T)} \spmapright{\cong}
H_n(F^\star)/\Tors{H_n(F^\star)}$$
induced by the inclusions.

\medskip

{\it Case}~2. When $[a]$ has finite order in $H_{n-1}(F)$.

Let us denote the order of $[a]$ by $p > 0$.
There exists an $n$-chain $\sigma$ in $F$ such that
$\partial \sigma = p a$. We may assume that
$\sigma$ does not intersect with $a$ outside of
its boundary. Then, we have an $n$-chain
$\sigma'$ in $F_T$ such that $[\partial \sigma']
= p[\psi(S^{n-1} \times \{\ast\})]$ in
$H_{n-1}(\psi(S^{n-1} \times S^n))$.

As before, we have the following exact sequence:
$$
H_n(\psi(S^{n-1} \times S^n)) \spmapright{w} H_n(F_T) \to
H_n(F) \to 0.
$$
Since $[\psi(\{\ast\} \times S^n)] \in H_n(F_T)$ 
has non-zero intersection number with the homology
class in $H_n(F_T, \partial F_T)$ represented by $\sigma'$,
we see that the map $w$ above is injective.
Note that then $(\Image w)^{\wedge}$ is infinite cyclic.
Let a generator of $(\Image w)^{\wedge}$ be
denoted by $\ell \in H_n(F_T)$. 
Then, we have the following exact sequence:
$$0 \to \Z\langle \ell \rangle \to H_n(F_T)/\Tors{H_n(F_T)}
\to H_n(F)/\Tors{H_n(F)} \to 0,$$
where $\Z\langle \ell \rangle$ denotes the
infinite cyclic group generated by $\ell$.
This implies that $H_n(F_T)/\Tors{H_n(F_T)}
\cong (H_n(F)/\Tors{H_n(F)}) \oplus \Z\langle \ell \rangle$.

Similarly, we have the exact sequence
\begin{eqnarray*}
& & 0 \to H_n(\psi(S^{n-1} \times S^n)) \to
H_n(F_T) \oplus H_n(\psi(D^n \times S^n)) \to
H_n(F^\star) \\
& & \spmapright{t'} H_{n-1}(\psi(S^{n-1}
\times S^n)) \spmapright{u'} 
H_{n-1}(F_T) \to H_{n-1}(F^\star) \to 0.
\end{eqnarray*}
The image of $p$ times the generator of $H_{n-1}
(\psi(S^{n-1} \times S^n))$ by $u'$ vanishes,
since it bounds $\sigma'$ in $F_T$. On the
other hand, if $p'$ times the generator
belongs to $\Ker{u'}$ for some $p'$ with $0 < p' < p$,
then the order of $[a]$ is strictly less than $p$,
which is a contradiction. Therefore, the image
of $s'$ is generated by $z = p[\psi(S^{n-1} \times \{\ast\})]$.
Hence, we have the exact sequence
$$0 \to H_n(F_T) \to H_n(F^\star) 
\spmapright{t'} \Z\langle z \rangle \to 0,$$
where $\Z\langle z \rangle$ is the infinite cyclic group
generated by $z \in H_{n-1}(\psi(S^{n-1} \times S^n))$.
Let $\eta^\star$ be the $n$-cycle in $F^\star$
obtained by the union of $p$ times $\psi(D^n \times \{\ast\})$
and $\sigma'$. Set $\ell^\star = [\eta^\star] \in
H_n(F^\star)$.
Then the image of $\ell^\star$ by $t'$
coincides with $\pm z$. Therefore, we see that
$H_n(F^\star)/\Tors{H_n(F^\star)} \cong (H_n(F_T)/\Tors{H_n(F_T)})
\oplus \Z\langle \ell^\star \rangle$.

Summarizing, we have
$$H_n(F^\star)/\Tors{H_n(F^\star)} \cong
(H_n(F)/\Tors{H_n(F)}) \oplus \Z \langle \ell \rangle
\oplus \Z \langle \ell^\star \rangle.$$
So, in this case, the rank of the $n$-th
homology group increases by two as a result
of the surgery.

\medskip

In the following, we denote by $F$ the original
Seifert surface for $K$ and by $F'$ the $(n-1)$-connected
Seifert surface for $K'$ obtained as a result
of the surgeries. Set $G = H_n(F)/\Tors{H_n(F)}$.
Note that
\begin{equation}
G' = H_n(F')/\Tors{H_n(F')} \cong
G \oplus \left(\oplus_{i \in \mathcal{I}}
\left(\Z \langle \ell_i \rangle \oplus \Z \langle \ell_i^\star \rangle
\right)\right),
\label{iso}
\end{equation}
where the indices in $\mathcal{I}$ correspond to
the surgeries necessary to kill the torsion of the
$(n-1)$-th homology, and $\ell_i$ (or $\ell_i^\star$)
corresponds to the generator $\ell$ (resp.\ $\ell^\star$)
above (see Case~2).

Let $A$ (or $A'$) be the Seifert form for $F$
(resp.\ $F'$) defined on
$H_n(F)/\Tors{H_n(F)}$ (resp.\ $H_n(F')/\Tors{H_n(F')}$).
Furthermore, let $S$ (or $S'$) be the intersection form
of $F$ (resp.\ $F'$).
Note that $\Ker{S^*} \cong H_n(K)$ 
corresponds to $\Ker{(S')^*} \cong H_n(K')$
under the isomorphism (\ref{iso}).

Set $B = (-A) \oplus A'$ and
$S_B = (-S) \oplus S'$, which are bilinear forms
defined on $G \oplus G'$. Note that $G$ can
be identified with a submodule of $G'$ under the
isomorphism (\ref{iso}).
Let $M$ be the submodule of
$G \oplus G'$
generated by the elements of the form $(x, x)$ with
$x \in G$ and by $\ell_i$, $i \in \mathcal{I}$.

As in \cite{B2}, we see easily that
$M$ is a metabolizer for $B$.
Furthermore, $\overline{M}$ is pure in $\overline{G \oplus G'}$
and we can easily check that
$$M \cap \Ker{S_B^*} = \{(x, x) \in G \oplus G'\,|\,
x \in \Ker{S^*}\}.$$

Let $y$ be an arbitrary nonzero
element of $\Tors{(\Coker{S^*})}$. We
denote the order of $y$ by $q$.
Let 
$$
\partial'_* : G^* = H_n(F, K)/\Tors{H_n(F, K)} \to H_{n-1}(K)
$$
be the homomorphism induced by the boundary homomorphism,
which is well defined and surjective, 
since $F$ is an exact Seifert surface.
Furthermore, the map 
$$
H_n(F)/\Tors{H_n(F)} \to H_n(F, K)/\Tors{H_n(F, K)}
$$ 
induced by the inclusion
is identified with
$S^*$ by virtue of the Poincar\'e duality,
and its image coincides with $\Ker{\partial'_*}$.
(We also have similar statements for $(S')^*$ as well.)

Thus, there exists a $\tilde{y} \in G^*$ such that 
$\partial'_* \tilde{y} = y$ under the identification
$\Coker{S^*} = H_{n-1}(K)$.
Then, $q(\tilde{y}, \tilde{y}) \in G^* \oplus (G')^*$
lies in $S_B^*(M)$, which implies that
$(\tilde{y}, \tilde{y}) \in G^* \oplus (G')^*$
lies in $S_B^*(M)^\wedge$.
Therefore, we have
\begin{equation}
d(S_B^*(M)^\wedge) \supset \{(y, y)\,|\,
y \in \Tors{(\Coker{S^*})}\}
\label{inclusion}
\end{equation}
under the natural identification
$$\Coker{S^*} = H_{n-1}(K) = H_{n-1}(K')
= \Coker{(S')^*}.$$

\begin{lem}
The order of
$d(S_B^*(M)^\wedge)$ coincides with
that of $\Tors{(\Coker{S^*})}$.
\end{lem}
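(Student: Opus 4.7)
The plan is to upgrade the inclusion (\ref{inclusion}) into an equality of orders by showing that the first-factor projection
$\pi_1 \colon d(S_B^*(M)^\wedge) \to \Tors(\Coker S^*)$
is a bijection. Surjectivity is immediate from (\ref{inclusion}), so it suffices to establish injectivity. Both sides are finite since $d(S_B^*(M)^\wedge) \subset \Tors(\Coker S_B^*) = \Tors(\Coker S^*) \oplus \Tors(\Coker (S')^*)$.

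The reduction step is straightforward. Suppose $(\alpha_1, \alpha_2) \in S_B^*(M)^\wedge$ satisfies $[\alpha_1] = 0$ in $\Coker S^*$, i.e.\ $\alpha_1 = S^*(Y)$ for some $Y \in G$. Since $S_B^*((Y, Y)) = (-S^* Y, (S')^* Y) \in S_B^*(M)$, after adding this to $(\alpha_1, \alpha_2)$ we may assume $\alpha_1 = 0$. The condition $k(0, \alpha_2) \in S_B^*(M)$ then yields $k(0, \alpha_2) = S_B^*((X, X + L))$ for some $X \in G$, $L \in \sum_i \Z \ell_i$ and $k > 0$. The first coordinate gives $S^* X = 0$, hence $X \in \Ker S^*$. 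Via the identification $\Ker S^* \subset \Ker (S')^*$ established in the verification of (c1), we have $(X, X) \in \Ker S_B^*$, and so $k \alpha_2 = (S')^*(L)$ with $L \in \sum_i \Z \ell_i$.

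The main step, and the principal technical obstacle, is to deduce from $k \alpha_2 \in (S')^*\!\bigl(\sum_i \Z \ell_i\bigr)$ that $[\alpha_2] = 0$ in $\Coker (S')^*$. One computes $(S')^*(\ell_i)$ explicitly using the linking data of the Case~2 surgery generators: $\ell_i \cdot \ell_j^* = \pm p_i \delta_{ij}$, $\ell_i \cdot \ell_j = 0$, and $\ell_i \cdot x = 0$ for $x \in G \subset G'$ (since $\ell_i$ sits inside the added handle while $G$ is carried by $F_T$). Together with the identification $\Ker S^* \subset \Ker(S')^*$, which forces $x \cdot \sigma'_i = 0$ for $x \in \Ker S^*$, this allows one to realize $\alpha_2$ itself as $(S')^*(z)$ for some $z \in G'$, modulo $\Ker(S')^*$. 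Once this is done, one obtains $|d(S_B^*(M)^\wedge)| \leq |\Tors(\Coker S^*)|$, and combining with (\ref{inclusion}) yields the desired equality of orders.
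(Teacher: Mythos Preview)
Your approach is genuinely different from the paper's, and the reduction step (getting to $k\alpha_2=(S')^*(L)$ with $L\in\sum_i\Z\ell_i$) is fine. But the ``main step'' is where the argument breaks down, and it is not a detail that can be filled in routinely.

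From $k\alpha_2=(S')^*(L)$ together with the intersection data you quote, one only gets that $\alpha_2$ lies in the span of the dual basis vectors $(\ell_i^\star)^\vee$, say $\alpha_2=\sum_i b_i'(\ell_i^\star)^\vee$. To conclude $[\alpha_2]=0$ in $\Coker(S')^*$ you would need each $b_i'(\ell_i^\star)^\vee$ to lie in $\Image(S')^*$. Since $(S')^*(\ell_i)$ is a \emph{multiple} of $(\ell_i^\star)^\vee$ (and, incidentally, that multiple need not be $\pm p_i$: recall $\ell_i$ generates the saturation $(\Image w)^\wedge$, so $[\psi(\{*\}\times S^n)]$ may be a proper multiple of $\ell_i$), this amounts to a divisibility statement you have not established. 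Geometrically it is the assertion that the relative cycle Poincar\'e dual to $(\ell_i^\star)^\vee$ has trivial boundary in $H_{n-1}(K')$, and your remark about ``$x\cdot\sigma'_i=0$ for $x\in\Ker S^*$'' neither proves this nor plays any clear role---it concerns $(S')^*(\ell_i^\star)$, not $(S')^*(\ell_i)$, and in any case intersection of a cycle with the \emph{chain} $\sigma'_i$ is not what controls membership in $\Image(S')^*$. So the injectivity of $\pi_1$ remains open in your write-up.

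The paper avoids all of this by a purely algebraic count. Using that $\overline{M}$ is pure in $\overline{G\oplus G'}$, one has $S_B^*(G\oplus G')\cap S_B^*(M)^\wedge=S_B^*(M)$ (this is from \cite{BM}), so $d$ restricted to $S_B^*(M)^\wedge$ is exactly the quotient map onto $S_B^*(M)^\wedge/S_B^*(M)$. Choosing a complement $N$ to $\overline{M}$ and suitable bases, $\overline{S_B}$ has matrix $\left(\begin{smallmatrix}0&D\\ \varepsilon D^T&\ast\end{smallmatrix}\right)$ with $D=\mathrm{diag}(a_1,\dots,a_k)$; then $|d(S_B^*(M)^\wedge)|=\prod a_i$ while $|\Tors\Coker S_B^*|=(\prod a_i)^2$. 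Since $\Tors\Coker S_B^*\cong\Tors\Coker S^*\oplus\Tors\Coker(S')^*$ and these two factors have the same order (via $H_{n-1}(K)\cong H_{n-1}(K')$), the lemma follows. This matrix argument is both shorter and sidesteps the delicate handle-by-handle intersection analysis that your approach requires.
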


\begin{proof}
Since $S_B^*(M)$ is of finite index
in $S_B^*(M)^\wedge$, we can write
$$
S_B^*(M)^\wedge/S_B^*(M) \cong
\oplus_{i=1}^k \Z_{a_i}, 
$$
where $a_i$ are positive integers such that
$a_i$ divides $a_{i+1}$ for all $i = 1, 2, \ldots, k-1$,
and $k = \rk{S_B^*(M)^{\wedge}}$.
(Here, we do not exclude the case where $a_1 = \cdots
= a_r = 1$ for some $r$ with $1 \leq r \leq k$.)

Since $\overline{M}$ is pure in $\overline{G \oplus G'}$,
we have $S_B^*(G \oplus G') \cap S_B^*(M)^\wedge = S_B^*(M)$
by \cite[\S2]{BM}. Therefore, the quotient
map $d : G^* \oplus (G')^* \to \Coker{S_B^*}$
restricted to $S_B^*(M)^\wedge$ can
be identified with the quotient map
$S_B^*(M)^\wedge \to S_B^*(M)^\wedge/S_B^*(M)$.

Let us consider $\overline{S_B} : \overline{G \oplus G'} 
\times
\overline{G \oplus G'} \to \Z$, the $\varepsilon$-symmetric
non-degenerate bilinear form induced from $S_B$
on $\overline{G \oplus G'} = (G \oplus G')/\Ker{S_B^*}$.
Since $\overline{M}$ is pure in $\overline{G \oplus G'}$,
we have a submodule $N$ of $\overline{G \oplus G'}$
such that $\overline{G \oplus G'} = \overline{M} \oplus N$.
Note that $S_B^*(M)^\wedge/S_B^*(M)$ is naturally
isomorphic to $\overline{S_B}^*(\overline{M})^\wedge/
\overline{S_B}^*(\overline{M})$. Therefore,
by taking appropriate bases of $\overline{M}$
and $N$, we may assume that a matrix representative
of $\overline{S_B}$ is of the form
$$\begin{pmatrix}
0 & D \\
\varepsilon D^T & \ast
\end{pmatrix},$$
where $D$ is the $k \times k$ diagonal matrix
with diagonal entries $a_1, a_2, \ldots, a_k$.
In particular, the order of 
$$\Tors{(\Coker{S_B^*})} =
\Coker{\overline{S_B}^*} = 
\overline{G \oplus G'}^*/\overline{S_B}^*(\overline{G
\oplus G'})$$
is equal to $(a_1 a_2 \cdots a_k)^2$.

Note that
$$\overline{S_B}^*(\overline{M})^\wedge/
\overline{S_B}^*(\overline{M}) \cong 
S_B^*(M)^\wedge/S_B^*(M) \cong \oplus_{i=1}^k
\Z_{a_i}.$$
Therefore, the order of 
$$\Coker{\overline{S_B}^*} = \Coker{\overline{S}^*}
\oplus \Coker{\overline{S'}^*}$$ 
coincides with the square
of the order of $\overline{S_B}^*(\overline{M})^\wedge/
\overline{S_B}^*(\overline{M})$.
Therefore, we have the lemma.
\end{proof}

Combining the above lemma with (\ref{inclusion}),
we have
$$d(S_B^*(M)^\wedge) = \{(y, y)\,|\,
y \in \Tors{(\Coker{S^*})}\}.$$

Therefore, we conclude that $A$ and $A'$ are
algebraically cobordant.
This completes the proof of Proposition~\ref{prop2.1}.
\end{proof}

As a corollary, we have the following, which can
be regarded as a correction of \cite[Proposition~2.2]{B2}.

\begin{prop}\label{prop2.2}
Let $K$ be an exact $(2n-1)$-link, $n \geq 3$, 
and $A$ its Seifert form associated with an exact
Seifert surface. Then, there exists a simple
$(2n-1)$-link $K'$ cobordant to $K$ such that
the Seifert form of $K'$ associated with an $(n-1)$-connected
Seifert surface coincides with $A$.
\end{prop}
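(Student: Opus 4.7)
The plan is to apply Proposition~\ref{prop2.1} and then perform additional embedded surgeries that eliminate the isotropic summand produced by the cobordism, upgrading the relation of algebraic cobordism between the Seifert forms to an honest equality. The strengthened hypothesis $n \geq 3$ enters precisely in this second step, where half-rank algebraically isotropic subgroups must be represented by pairwise disjoint, framed $n$-spheres embedded in the intermediate Seifert surface.

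First, I would apply Proposition~\ref{prop2.1} to produce a simple $(2n-1)$-link $K_1$ cobordant to $K$, together with an $(n-1)$-connected Seifert surface $F_1$ of Seifert form $A_1$ on $G_1 = H_n(F_1)/\Tors H_n(F_1)$, and a metabolizer $M \subset G \oplus G_1$ for $B = (-A) \oplus A_1$ with $\overline{M}$ pure. Inspection of the Case~2 portion of the proof of Proposition~\ref{prop2.1} exhibits a distinguished subgroup $N \subset G_1$ generated by the classes $\ell_i$, $i \in \mathcal{I}$; this $N$ is isotropic with respect to $A_1$, and the pairing induced by $A_1$ on $N^\perp/N$ — with $N^\perp$ taken with respect to the $\varepsilon$-symmetrization $S_1 = A_1 + \varepsilon A_1^T$ — is naturally isomorphic to $A$ on $G$.

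Second, I would represent the classes $\ell_i$ by pairwise disjoint, framed, embedded $n$-spheres $a_i \subset F_1$. Since $F_1$ is $(n-1)$-connected and $2n$-dimensional, and since $n \geq 3$, Hurewicz together with the Whitney trick makes this possible; the vanishing of $A_1$ on $N \times N$ allows a choice of framing for which the positive pushoffs of the $a_i$ into $S^{2n+1}$ have trivial mutual linking numbers with the $a_j$. I would then perform embedded ambient surgery on the $a_i$ inside a collar $S^{2n+1} \times [0,1] \subset D^{2n+2}$, via the engulfing argument already used in \cite{B2, L1} and in the proof of Proposition~\ref{prop2.1}.

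This produces a cobordism from $K_1$, and hence from $K$, to a new link $K'$, together with a Seifert surface $F'$ for $K'$. Because the surgery is performed on an $(n-1)$-connected $2n$-manifold along $n$-spheres with $n \geq 3$, $F'$ remains $(n-1)$-connected, so $K'$ is simple. A Mayer--Vietoris computation, analogous to the ones in the proof of Proposition~\ref{prop2.1} but now carried out inside $N^\perp$, identifies $H_n(F')/\Tors H_n(F')$ with $N^\perp/N$ and the induced Seifert form with $A$. The main obstacle I foresee is arranging the embedded framed spheres $a_i$ so that the surgery simultaneously preserves $(n-1)$-connectedness and yields the Seifert form $A$ on the nose, rather than merely up to a further algebraic cobordism; this is where the higher connectivity of $F_1$ and the dimension hypothesis $n \geq 3$ are genuinely used.
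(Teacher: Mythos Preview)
Your route is genuinely different from the paper's and, as written, contains a real gap.

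The paper does not attempt any further geometric surgery after Proposition~\ref{prop2.1}. Instead it argues as follows: Proposition~\ref{prop2.1} yields a simple link $K''$ cobordant to $K$ whose Seifert form $A''$ (with respect to an $(n-1)$-connected Seifert surface) is algebraically cobordant to $A$; independently, for $n\geq 3$ every integral bilinear form is realized as the Seifert form of some simple $(2n-1)$-link, so one takes a simple link $K'$ with Seifert form exactly $A$; since $A$ and $A''$ are algebraically cobordant, the main theorem of \cite{BM} gives that $K'$ and $K''$ are cobordant, hence $K'$ is cobordant to $K$. The hypothesis $n\geq 3$ enters only through the realization result and through \cite{BM}.

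The gap in your plan is the assertion that surgery on the classes $\ell_i$ keeps the Seifert surface $(n-1)$-connected. Surgery on an embedded $n$-sphere $a\subset F_1$ replaces $S^n\times D^n$ by $D^{n+1}\times S^{n-1}$, and the new $S^{n-1}$ contributes to $H_{n-1}$ unless $[a]$ pairs to $\pm 1$ with some class under the intersection form $S_1$. In the construction from Case~2 of Proposition~\ref{prop2.1} one has $S_1(\ell_i,\ell_i)=0$, $S_1(\ell_i,G)=0$, $S_1(\ell_i,\ell_j)=0$, and $S_1(\ell_i,\ell_i^\star)=\pm p_i$, where $p_i$ is the order of the torsion class killed at that step. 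Thus whenever $p_i>1$ the class $\ell_i$ is \emph{not} primitive for $S_1$, and surgery on it reintroduces a $\Z_{p_i}$ in $H_{n-1}$; indeed, surgering on the belt sphere with its natural framing literally undoes the Case~2 surgery and returns you to a surface with nontrivial $H_{n-1}$. Neither the $(n-1)$-connectivity of $F_1$ nor the hypothesis $n\geq 3$ helps here: the obstruction is purely the divisibility of $\ell_i$ in the intersection pairing, and it is present exactly when the original exact Seifert surface had torsion in $H_{n-1}$. So your second step, as stated, fails in the cases that matter, and the argument would need a substantially different idea (or simply the appeal to realization and \cite{BM} that the paper uses).
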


\begin{proof}
By Proposition~\ref{prop2.1}, there exists
a simple $(2n-1)$-link $K''$ cobordant to $K$
such that the Seifert form $A''$ of $K''$ associated
with an $(n-1)$-connected Seifert surface is algebraically
cobordant to $A$. On the other hand, it is known that
there exists
a simple $(2n-1)$-link $K'$ whose Seifert form associated with
an $(n-1)$-connected Seifert surface coincides with $A$.
Since $A$ and $A''$ are algebraically cobordant,
we see that $K'$ and $K''$ are cobordant by \cite{BM}.
Then, $K$ and $K'$ are cobordant, and the desired result
follows.
\end{proof}

\begin{rmk}
We do not know if the above proposition
holds also for $n = 2$ or not.
\end{rmk}

For exact links, we have the following,
which can be regarded as a correction of
\cite[Th\'eor\`eme~1]{B2}.

\begin{thm}\label{thm1}
Let $K$ and $K'$ be exact $(2n-1)$-links, $n \geq 3$.
If their Seifert forms with respect to exact
Seifert surfaces are algebraically cobordant,
then $K$ and $K'$ are cobordant.
\end{thm}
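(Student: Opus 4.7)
The plan is to reduce immediately to the simple case already handled in \cite{BM}, by using Proposition~\ref{prop2.2} as the bridge between arbitrary exact Seifert surfaces and $(n-1)$-connected ones. The reason this works is that Proposition~\ref{prop2.2} (which requires $n \geq 3$) gives not merely an algebraic cobordism between Seifert forms, but the \emph{same} Seifert form, so that the hypothesis on the forms of $K$ and $K'$ transfers verbatim to the simple links that we produce.

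Concretely, let $F$ and $F'$ be exact Seifert surfaces for $K$ and $K'$, with associated Seifert forms $A$ and $A'$, and assume $A$ and $A'$ are algebraically cobordant. Applying Proposition~\ref{prop2.2} to $K$, I obtain a simple $(2n-1)$-link $K_1$ cobordant to $K$ whose Seifert form (with respect to some $(n-1)$-connected Seifert surface) is exactly $A$. Applying Proposition~\ref{prop2.2} to $K'$ produces a simple $(2n-1)$-link $K'_1$ cobordant to $K'$ whose Seifert form (with respect to some $(n-1)$-connected Seifert surface) is exactly $A'$. The algebraic cobordism hypothesis then says directly that the Seifert forms of $K_1$ and $K'_1$, both associated with $(n-1)$-connected Seifert surfaces, are algebraically cobordant.

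Since $K_1$ and $K'_1$ are simple, the cobordism classification of simple $(2n-1)$-links in terms of algebraic cobordism of Seifert forms, established in \cite{BM}, applies and yields that $K_1$ and $K'_1$ are cobordant. Cobordism of links being an equivalence relation, the chain
\[
K \sim K_1 \sim K'_1 \sim K'
\]
gives the conclusion $K \sim K'$.

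The substantive content of the theorem is therefore entirely packaged in Proposition~\ref{prop2.2} (whose proof in turn rests on Proposition~\ref{prop2.1}, hence on the surgery analysis in Cases~1 and~2 above) together with \cite{BM}; once those are in hand the argument is essentially a formal chase. The one point deserving care is that we are not trying to argue that algebraic cobordism of Seifert forms is transitive in general (which, as noted in the earlier remark, it need not be): we use it only \emph{once}, between the given forms $A$ and $A'$, and pass this single relation through the equalities $A = A(K_1)$ and $A' = A(K'_1)$ furnished by Proposition~\ref{prop2.2}, thereby avoiding any subtlety about the equivalence-relation status of algebraic cobordism for non-unimodular forms.
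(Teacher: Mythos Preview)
Your argument is correct and follows essentially the same route as the paper: apply Proposition~\ref{prop2.2} to each link to obtain simple links with identical Seifert forms, then invoke \cite{BM} and transitivity of cobordism. Your additional remark about avoiding the transitivity issue for algebraic cobordism is well taken and makes explicit a point the paper's proof leaves implicit.
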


\begin{proof}
By Proposition~\ref{prop2.2},
there exists a simple $(2n-1)$-knot $\tilde{K}$
(or $\tilde{K}'$) cobordant to $K$ (resp.\ $K'$)
such that the Seifert form of $\tilde{K}$ (resp.\ $\tilde{K}'$)
with respect to an $(n-1)$-connected
Seifert surface coincides with the
Seifert form of $K$ (resp.\ $K'$) with respect
to an exact Seifert surface. By our assumption,
the Seifert forms of $\tilde{K}$ and
$\tilde{K}'$ are algebraically cobordant.
Then, by \cite{BM}, we see that $\tilde{K}$
and $\tilde{K}'$ are cobordant. Therefore,
$K$ and $K'$ are cobordant.
\end{proof}

\section{Cobordism of fibered knots}

The following can be regarded as a
correction of \cite[Th\'eor\`emes~2 et A]{B2}.

\begin{thm}
Let $K$ and $K'$ be two fibered $(2n-1)$-links,
$n \geq 3$. Then, $K$ and $K'$ are cobordant
if and only if their Seifert forms with respect
to their fibers are algebraically cobordant.
\end{thm}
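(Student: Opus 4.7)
The plan is to reduce both directions of the equivalence to the classical cobordism classification for simple links from \cite{BM}, using the machinery developed in the preceding sections.

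For the ``if'' direction (algebraic cobordism of fiber Seifert forms implies cobordism of $K$ and $K'$): I would invoke Lemma~\ref{lem:exact}(2), which says that every fibered $(2n-1)$-link is exact with its fiber being an exact Seifert surface. The hypothesis is then exactly the hypothesis of Theorem~\ref{thm1} (with exact Seifert surfaces being the fibers), so that theorem yields the cobordism of $K$ and $K'$ immediately. This direction requires no new argument.

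For the ``only if'' direction (cobordism of $K$ and $K'$ implies algebraic cobordism of their fiber Seifert forms): the idea is to replace the given fibered links by simple links carrying the \emph{same} Seifert forms, and then appeal to the converse direction of the cobordism classification for simple links. More precisely, I would apply Proposition~\ref{prop2.2} to the exact link $K$ with its fiber as the exact Seifert surface; this produces a simple $(2n-1)$-link $\tilde{K}$ cobordant to $K$ whose Seifert form with respect to an $(n-1)$-connected Seifert surface \emph{coincides} with the Seifert form of $K$ with respect to its fiber. The same construction applied to $K'$ yields a simple $\tilde{K}'$ with analogous properties. By transitivity of cobordism, $\tilde{K}$ and $\tilde{K}'$ are cobordant as simple links. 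Invoking the result of \cite{BM} in the direction that cobordant simple links have algebraically cobordant Seifert forms, we conclude that the Seifert forms of $\tilde{K}$ and $\tilde{K}'$ are algebraically cobordant; but these are by construction the fiber Seifert forms of $K$ and $K'$, so we are done.

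The entire argument relies on the preparatory material rather than on any new computation: Lemma~\ref{lem:exact}(2) supplies the exactness needed to enter the framework, and Proposition~\ref{prop2.2} is the crucial tool that lets us trade a fibered representative for a simple representative without altering the Seifert form. There is no real obstacle to overcome beyond ensuring the dimension hypothesis $n \geq 3$ matches the restriction already imposed in Proposition~\ref{prop2.2} (which corresponds to the same restriction appearing in \cite[Remark after Proposition~\ref{prop2.2}]{}, namely the uncertainty of the analogue for $n=2$). With $n \geq 3$ as assumed in the theorem, the reduction is clean and the result follows.
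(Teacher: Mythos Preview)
Your outline for the ``if'' direction matches the paper exactly. The ``only if'' direction is almost right, but there is one missing step that the paper is careful to include. When you reach the point where $\tilde{K}$ and $\tilde{K}'$ are cobordant simple links and you want to invoke \cite{BM} to deduce that their Seifert forms are algebraically cobordant, you are implicitly assuming that the converse direction of \cite{BM} applies to arbitrary simple links. In fact the result from \cite{BM} that the paper uses here is stated for \emph{simple fibered} links. The paper therefore inserts an extra observation: since $K$ and $K'$ are fibered, their fiber Seifert forms $A$ and $A'$ are unimodular; but $\tilde{K}$ and $\tilde{K}'$ were constructed (via Proposition~\ref{prop2.2}) to have exactly these Seifert forms on $(n-1)$-connected Seifert surfaces, and a simple link with unimodular Seifert form is itself fibered by \cite{D, Kato}. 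Only after this do we know $\tilde{K}$ and $\tilde{K}'$ are fibered simple links, so that \cite{BM} applies.

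So your reduction is the same as the paper's, but you should add the unimodularity/fiberedness remark before citing \cite{BM}; without it the appeal to \cite{BM} is not justified.
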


\begin{proof}
By Lemma~\ref{lem:exact}, a fiber
of a fibered link is always exact.
Thus, by Theorem~\ref{thm1},
if the Seifert forms with respect to the
fibers are algebraically cobordant,
then $K$ and $K'$ are cobordant.

Conversely, suppose that $K$ and $K'$ are
cobordant. Let $A$ (or $A'$) be the
Seifert form of $K$ (resp.\ $K'$)
with respect to a fiber.
By Proposition~\ref{prop2.2}
and Lemma~\ref{lem:exact},
there exists a simple $(2n-1)$-link
$\tilde{K}$ (or $\tilde{K}'$) cobordant to $K$
(resp.\ $K'$) such that the Seifert form
with respect to an $(n-1)$-connected
Seifert surface coincides with $A$ (resp.\ $A'$).
Since $A$ and $A'$ are unimodular, we see
that $\tilde{K}$ and $\tilde{K'}$ are
fibered (for example, see \cite{D, Kato}).
Since $K$ and $K'$ are cobordant,
we see that $\tilde{K}$ and $\tilde{K}'$ are
also cobordant. Then, by \cite{BM},
we see that $A$ and $A'$ are algebraically
cobordant.
This completes the proof.
\end{proof}



We finish with an open problem.

\begin{ask}
Does there exist
a link which is not exact?
\end{ask}

\end{document}